\newtheorem{thm}{Theorem}
\newtheorem{prop}{Proposition}
\newtheorem{lemma}{Lemma}
\date{\today }
\title[ ]{Equilibria of three constrained point charges}
\author{G.Khimshiashvili*, G.Panina$^\dag$, D.Siersma$^\ddag$}
\address{*Ilia State University, Tbilisi, Georgia,
e-mail: giorgi.khimshiashvili@iliauni.edu.ge
\newline
$^\dag$ Institute for Informatics and Automation, St. Petersburg, Russia,
Saint-Petersburg State University, St. Petersburg, Russia,
e-mail:gaiane-panina@rambler.ru.
\newline
$^\ddag$ University of Utrecht,
Utrecht, The Netherlands, e-mail: D.Siersma@uu.nl.}
\begin{document}

\begin{abstract}
We study the critical points of Coulomb energy considered
as a function on configuration spaces associated with certain
geometric constraints. Two settings of such kind are discussed
in some detail. The first setting arises by considering polygons
of fixed perimeter with freely sliding positively charged vertices.
The second one is concerned with triples of positive charges
constrained to three concentric circles. In each of these cases
the Coulomb energy is generically a Morse function. We describe
the minima and other stationary points of Coulomb energy and show that,
for three charges, a pitchfork bifurcation takes place accompanied by
an effect of the Euler's Buckling Beam type.
\end{abstract}

\maketitle \setcounter{section}{0}

\section{Introduction}

We deal with equilibrium configurations of point charges with Coulomb interaction satisfying
certain geometric constraints. Our approach to this topic is similar to the paradigms used in
\cite{khi2}, \cite{khpasi1}. Namely, we consider the Coulomb energy
as a function on a certain configuration space naturally associated with the constraints in question,
and investigate its critical points. The main attention in this paper is given to two specific
problems naturally arising in this setting. The first one deals with identification and
calculation of equilibrium configurations of charges satisfying the given constraints. The second
one, called the \textit{inverse problem}, is concerned with characterizing those
configurations of points for which there exists a collection of non-zero charges such that the given
configuration is a critical point of Coulomb energy on the corresponding configuration space. Such configurations
are called {\it Coulomb equilibria}.

The geometric constraints considered below come from two sources: (i)  Coulomb energy of point charges
freely sliding along a flexible planar contour of fixed length, and (ii) Coulomb energy of concentric
orbitally constrained triples of charges. The first setting was inspired by the concept of "necklace with
interacting beads" introduced and investigated by P. Exner\cite{exn}. This setting has been
considered in a similar situation in our previous paper \cite{khpasi2}.

It turns out that interesting results exist even in the case of three charges. We focus on the minimum
energy and the other stationary points and values. While for almost equal charges the minimum is achieved
on a triangle configuration, it turns out that in both settings the global minimum is achieved in an aligned
situation if one of the charges is much smaller (or bigger, depending on the setting) than the others. The transition between these two states exhibits
the well-known  \textit{supercritical pitchfork bifurcation} accompanied by a \textit{fixing} effect, similar to
the Euler Buckling Beam phenomenon \cite{GS, PS}.

It seems worthy of adding that in most of our considerations the Coulomb forces can be replaced by various other central
forces. The qualitative behaviour will be the same with modifications on the quantitative side.
\medskip

\textbf{Acknowledgment.}{We gratefully acknowledge the hospitality and excellent working conditions at CIRM, Luminy,
where this research was carried out in January of 2016 in the framework of Research in Pairs program.
Research of G.Khimshiashvili was supported by Georgian Science Foundation grant FR/59/5-103/13.
Research of G. Panina was supported by state research grant N 0073-2015-0003.

\section{Configuration spaces and Coulomb energy}

We are basically interested in studying the equilibrium configurations of a system
of repelling charges. As usual equilibria are defined as the critical (stationary)
points of Coulomb energy of an $n$-tuple of points $\{p_i\}$
defined by the formula
$$E=\sum_{i<j}\frac{q_iq_j}{d_{ij}},$$
where $q_i$ are some positive numbers (charges)  assigned to
the points $p_i$, and $d_{ij}$ are the distances $|p_ip_j|$.
To this end the Coulomb energy is considered as a function on a certain configuration
space naturally associated with the given geometric constraints.

Our first configuration space is the space of all labeled $n$-tuples of points\footnote{Indices are modulo $n$, so in the summation we assume $p_{n+1}=p_1$.} with the constraint that the perimeter is not bigger than $1$: $$\overline{M}(n)=\{(p_1,...,p_n)|\  p_i \in \mathbb{R}^2, \  p_1=0, \ \sum_{i=1}^n|p_ip_{i+1}|\leq 1\}/SO(2).$$  Informally, one can think  of a closed rope  with freely sliding positively charged points on it.
Factorization by $SO(2)$ means that we are only interested in the shape of a configuration and ignore orientation preserving motions.
However, we do not identify symmetric $n$-tuples.

\medskip

Let us also introduce the space
$${M}(n)=\{(p_1,...,p_n)|\  p_i \in \mathbb{R}^2, \  p_1=0, \ \sum_{i=1}^n|p_ip_{i+1}|=1\}/SO(2).$$
It is naturally identified with the space of all planar polygons with fixed perimeter (vertices are allowed to coincide)
factorized by orientation preserving motions. The elements of the space $M(n)$ are called either \textit{polygons}
or \textit{configurations.}

For our purposes, it is important to know the topological type of the configuration space.

\begin{thm}\label{ThmProj}
The space ${M}(n)$  is diffeomorphic to  the complex projective space $\mathbb{C}P^{n-2}$.
\end{thm}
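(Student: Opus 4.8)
The plan is to pass from vertices to edge vectors and recognize $M(n)$ as the projectivization of a hyperplane in $\mathbb{C}^n$. Identifying $\mathbb{R}^2$ with $\mathbb{C}$, I assign to a configuration $(p_1,\dots,p_n)$ its edge vectors $e_k = p_{k+1}-p_k \in \mathbb{C}$ (indices mod $n$). Since $p_{n+1}=p_1$, the edges satisfy the closure relation $\sum_{k=1}^n e_k = 0$ automatically, and conversely the $e_k$ together with $p_1=0$ reconstruct the $p_k$ by partial sums; thus configurations correspond bijectively to tuples lying in the complex hyperplane $V = \{e\in\mathbb{C}^n : \sum_k e_k = 0\}$. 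The perimeter equals $\sum_k |e_k|$, and a rotation by angle $\theta$ acts as the diagonal multiplication $e_k \mapsto e^{i\theta}e_k$, i.e. as the standard $U(1)$-action. Hence
\[
M(n) \;\cong\; \{e \in V : \textstyle\sum_k |e_k| = 1\}\,/\,U(1).
\]

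The key observation is that this quotient is exactly the projectivization $\mathbb{P}(V)$. Writing $\mathbb{C}^\ast = \mathbb{R}_{>0}\times U(1)$, the group $\mathbb{C}^\ast$ acts freely on $V\setminus\{0\}$ with quotient $\mathbb{P}(V)$, and since $\dim_{\mathbb{C}} V = n-1$ we have $\mathbb{P}(V)\cong\mathbb{C}P^{n-2}$. Now $e\mapsto \sum_k|e_k|$ is positive on $V\setminus\{0\}$ and scales linearly under $\mathbb{R}_{>0}$, so each $\mathbb{R}_{>0}$-orbit meets the level set $\{\sum_k|e_k|=1\}$ in exactly one point; that is, the perimeter-one locus is a global slice for the $\mathbb{R}_{>0}$-factor. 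Therefore the map $F$ sending a polygon with edges $(e_k)$ to $[e_1:\dots:e_n]\in\mathbb{P}(V)$ is well defined (the target is nonzero because the perimeter is $1$, and is $U(1)$-invariant), injective (if two polygons have proportional edge tuples, the proportionality factor has modulus $1$ by equality of perimeters, hence is a rotation), and surjective (normalize any $[z]$ by $e_k = z_k/\sum_j|z_j|$). This yields the set-theoretic identification $M(n)\cong\mathbb{C}P^{n-2}$, which I would sanity-check on $n=2,3$, where it returns the point $\mathbb{C}P^0$ and the sphere $\mathbb{C}P^1$ of oriented triangle shapes.

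It remains to upgrade $F$ to a diffeomorphism, and this is the step needing care. First, $F$ is a continuous bijection and both spaces are compact and Hausdorff, so $F$ is automatically a homeomorphism, which already exhibits $M(n)$ as a topological manifold of the asserted type. For the smooth structure the point to stress is that one should read $M(n)$ as the quotient $(V\setminus\{0\})/\mathbb{C}^\ast$ rather than as the subset $\{\sum_k|e_k|=1\}$: the $\ell^1$-sphere is only a topological slice — it has ridges along the degenerate locus where some $e_k$ vanishes, since $|e_k|$ fails to be smooth there — but the ambient action of $\mathbb{C}^\ast=\mathbb{R}_{>0}\times U(1)$ on the smooth manifold $V\setminus\{0\}$ is smooth, free and proper, so the quotient carries a canonical smooth structure, namely that of $\mathbb{C}P^{n-2}$. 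The apparent singularities of the perimeter constraint are thus an artifact of the chosen normalization and disappear in the quotient. I expect the main obstacle to be precisely this reconciliation — convincing the reader that non-smoothness of the perimeter function does not obstruct smoothness of $M(n)$ — which I would handle by carrying out all arguments at the level of the $\mathbb{C}^\ast$-quotient, invoking the quotient-manifold theorem, and using the $\ell^1$-normalization only to display representatives.
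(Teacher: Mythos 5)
Your proposal is correct and follows essentially the same route as the paper: pass to edge vectors viewed as complex numbers, identify the closure condition with the hyperplane $\sum_k e_k=0$, and observe that the $\mathbb{C}^\ast$-action (rotations times dilations, the latter normalized away by the perimeter) realizes $M(n)$ as $\mathbb{P}(V)\cong\mathbb{C}P^{n-2}$. You are in fact more careful than the paper on the one delicate point — the non-smoothness of the $\ell^1$-perimeter sphere where some $e_k=0$, resolved by taking the smooth structure from the free proper $\mathbb{C}^\ast$-quotient — which the paper's brief argument passes over in silence.
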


Proof.
By definition,  $$\mathbb{C}P^{n-2}=\{(u_1:...:u_{n-1})|\  u_i \in \mathbb{C},
 \hbox{ not all   }  u_i=0  \},$$ assuming that two proportional $(n-1)$-tuples are identified.
 We  add one more term and write $$\mathbb{C}P^{n-2}=\{(u_1:...:u_{n-1}:-\sum_{i=1}^{n-1}u_i )|\  u_i \in \mathbb{C}, \hbox{ not all   }  u_i=0\},$$ with identification
 $$ \ (u_1:...:u_{n-1}:-\sum_{i=1}^{n-1}u_i )=(\lambda \cdot u_1:...: \lambda \cdot u_{n-1}:-\lambda \cdot\sum_{i=1}^{n-1}u_i ) .$$
 This can be interpreted as the space of all $n$-gons with non-zero perimeter.  Indeed,  complex numbers $u_i$ yield  vectors
 in the plane. The factorization by multiplication by complex numbers amounts to factorization of the space of
 polygons by all possible rotations and scalings.\qed

\medskip

The space  $\overline{M}(n)$  is a cone over $M(n)$, and
we have a natural inclusion $\overline{M}(n) \supset M(n).$
All the polygons with non-zero perimeter strictly smaller than $1$ form a manifold  diffeomorphic to $\mathbb{C}P^{n-2}\times \mathbb{R}$, so it makes sense to  speak of critical points of the Coulomb energy $E$.
The informal message of the following proposition is that  the ''sliding charges on a closed rope''  problem  reduces to ''fixed perimeter'' problem.

\begin{prop} The Coulomb energy has no critical points  in $\overline{M}(n)$ outside $M(n)$.
\end{prop}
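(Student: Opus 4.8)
The plan is to exploit the scaling homogeneity of the Coulomb energy. On the open region $\overline{M}(n)\setminus M(n)$, i.e. the polygons of perimeter strictly between $0$ and $1$, we have the identification with $\mathbb{C}P^{n-2}\times\mathbb{R}$ recorded just above the statement, where the $\mathbb{R}$-factor records the scale (equivalently, the perimeter) and the $\mathbb{C}P^{n-2}$-factor records the shape. The key point is that scaling the whole configuration is a genuine tangent direction of the interior region, and along this direction the energy is strictly monotone, so the gradient cannot vanish there.

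Concretely, I would first record how $E$ transforms under the dilation $(p_1,\dots,p_n)\mapsto(\lambda p_1,\dots,\lambda p_n)$ for $\lambda>0$. Each distance $d_{ij}$ becomes $\lambda d_{ij}$, so each summand $q_iq_j/d_{ij}$ becomes $\lambda^{-1}q_iq_j/d_{ij}$, and hence $E(\lambda\cdot)=\lambda^{-1}E(\cdot)$. Thus $E$ is homogeneous of degree $-1$ with respect to scaling. Differentiating this identity, Euler's relation for homogeneous functions (equivalently $\frac{d}{d\lambda}\big|_{\lambda=1}\lambda^{-1}E=-E$) shows that the directional derivative of $E$ along the infinitesimal scaling vector field $X=\sum_i p_i\cdot\partial_{p_i}$ equals $-E$.

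Now, since all charges $q_i$ are positive and every configuration with pairwise distinct points has all $d_{ij}>0$, we have $E>0$, hence $-E<0\ne 0$ on the locus where $E$ is finite and smooth. Therefore $X$ is a tangent direction along which $E$ has nonzero derivative, so $\nabla E\ne 0$ at every such point and no point of the interior region is critical; configurations with coinciding points give $E=+\infty$ and are excluded from the outset.

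The one thing to verify carefully is that the scaling field $X$ is genuinely tangent to the interior region and survives the factorization by $SO(2)$. Since scaling commutes with rotations, $X$ descends to a well-defined vector field on the quotient, and at a polygon of perimeter $p_0\in(0,1)$ the dilation changes the perimeter to $\lambda p_0$, which stays inside $\overline{M}(n)$ for $\lambda$ near $1$. Thus $X$ corresponds exactly to the $\mathbb{R}$-direction of $\mathbb{C}P^{n-2}\times\mathbb{R}$ and is an honest interior tangent direction rather than a boundary-pointing one. This identification of the scaling direction with the $\mathbb{R}$-factor is the only delicate step; once it is in place the homogeneity computation of the preceding paragraph finishes the argument.
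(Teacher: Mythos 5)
Your proposal is correct and is essentially the paper's own argument: the paper likewise observes that dilating a polygon of perimeter strictly less than $1$ gives a tangent direction along which $E$ has nonzero derivative, since dilation strictly increases all pairwise distances. Your Euler-relation computation that the derivative equals $-E<0$, and your check that the scaling field descends to the $SO(2)$-quotient and matches the $\mathbb{R}$-factor of $\mathbb{C}P^{n-2}\times\mathbb{R}$, simply make explicit what the paper leaves implicit.
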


Proof. Assume $P$ is a critical polygon whose perimeter is strictly smaller than $1$. Its dilation gives a tangent vector
with a non-zero derivative of the Coulomb energy since the dilation strictly increases all pairwise distances between the points.\qed

\medskip

  \bigskip
The second constrained system is associated with a system of three concentric circles.
It is defined by a triple of positive numbers $r_1,r_2,r_3$. The corresponding
configuration space can be represented as

$$T(r_1,r_2,r_3)=\{(p_1,p_2,p_3)|  p_i \in \mathbb{R}^2, \ |p_1|=r_1,\ |p_2|=r_2,\ |p_1|=r_2\}/SO(2).$$

In other words, it is the configuration space of triples of points lying on three concentric circles.
Clearly, $T$ is diffeomorphic to the torus $S^1\times S^1$.
The elements of  $T(r_1,r_2,r_3)$ are also called \textit{configurations}.

\medskip

For both of the above spaces, a configuration  is called \textit{aligned} if all its vertices lie on a line.

  \medskip

  \textbf{Remark.} On each of the above spaces there is a natural involution $\nu$  which takes a configuration to its
  image under reflection  (with respect to a line). For a  configuration $P=\{p_i\}$, we have $\nu(P)=P$ if and only if
  all the points $p_i$ lie on a line. Since  the Coulomb energy is  symmetric with respect the involution, that is, $E(P)=E(\nu(P))$,
  each non-aligned critical configuration comes together with its symmetric image, which is also critical, and has the same Morse index.

\section{Three charges on a contour with given perimeter}

\label{s:perimeter}
In this section we study Coulomb energy on the space $M(3)$ which we also call the space of triangles.
Theorem \ref{ThmProj} implies that $M(3)$ is homeomorphic to the two-sphere $S^2$.

\medskip

Let us first analyze Coulomb energy of three constrained charges in the case when the ambient space
has dimension one. That is, we consider
the configuration space of all aligned triangles with fixed perimeter. Topologically this configuration space is a circle.
Each such  aligned triangle is a segment of length $\frac{1}{2}$  whose endpoints are two of the points $p_i$
and the third point lies between these two (or equals one of them). Two such aligned  triangles are identified
if they differ by reflection.

The Coulomb energy
has three poles on the circle (which correspond to maxima), and  three minima.
Each of the minima corresponds to the choice of the point that lies between two others.
The following observation is well known and easy to prove by direct computation.

\begin{lemma}\label{LemmaDimOne}
Assume that charges $(q_1,q_2,q_3)$ are positioned  on a line at points $p_1,p_2,p_3$ satisfying the fixed perimeter condition.
Then there exists a unique critical point
(which is the  local minimum) of the Coulomb energy with the point  $p_2$ lying between $p_1$ and $p_3$.
The point $p_2$ does not depend on $q_2$ and is given by the proportion

  $$\frac{d_{12}}{d_{23}}=\frac{\sqrt{q_1}}{\sqrt{q_3}}.$$
  The other  local minima (for $p_1$   and   for $p_3$ as intermediate charges) come analogously.
  The global minimum is the one with the smallest intermediate charge.
  All these critical points are non-degenerate Morse points.\qed

\end{lemma}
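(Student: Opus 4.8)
The plan is to reduce the problem to a one-variable minimization. Fix the combinatorial type in which $p_2$ is the interior point, so that $p_1,p_3$ are the endpoints of the segment. For such an aligned configuration one has $d_{13}=d_{12}+d_{23}$, and the fixed-perimeter condition $d_{12}+d_{23}+d_{13}=1$ forces $d_{13}=\tfrac12$ and $d_{12}+d_{23}=\tfrac12$. I would therefore parametrize the relevant arc by $x=d_{12}\in(0,\tfrac12)$, so that $d_{23}=\tfrac12-x$, and write the Coulomb energy as
$$f(x)=\frac{q_1q_2}{x}+\frac{q_2q_3}{\tfrac12-x}+2q_1q_3,$$
where the last term is constant because $d_{13}$ is fixed at $\tfrac12$.

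Next I would differentiate. Setting $f'(x)=-q_1q_2/x^2+q_2q_3/(\tfrac12-x)^2=0$ and cancelling the common positive factor $q_2$ yields $q_1(\tfrac12-x)^2=q_3x^2$, that is $d_{12}/d_{23}=\sqrt{q_1}/\sqrt{q_3}$, which is the asserted proportion; the cancellation of $q_2$ is precisely the statement that the location of $p_2$ does not depend on $q_2$. Uniqueness and non-degeneracy then follow from convexity: $f''(x)=2q_1q_2/x^3+2q_2q_3/(\tfrac12-x)^3>0$ throughout the open arc, while $f\to+\infty$ at both endpoints (where $p_2$ collides with $p_1$ or $p_3$, the poles that correspond to maxima). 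Hence $f$ has a single interior critical point, a local minimum with strictly positive second derivative, i.e. a non-degenerate Morse point of index $0$. The two remaining local minima, with $p_1$ or $p_3$ as the intermediate charge, are obtained verbatim by relabelling.

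Finally, to identify the global minimum among these three, I would substitute the critical proportion back into $f$. When charge $q_k$ is the interior one and $q_i,q_j$ are the endpoints, the minimal value takes the form $2q_k(\sqrt{q_i}+\sqrt{q_j})^2+2q_iq_j$. Expanding, each of the three expressions shares the common part $2(q_1q_2+q_1q_3+q_2q_3)$, so they differ only in a cross term proportional to $q_k\sqrt{q_iq_j}=\sqrt{q_k}\,\sqrt{q_1q_2q_3}$. Thus the smallest energy is attained exactly when $\sqrt{q_k}$ — equivalently $q_k$ — is smallest, which proves that the global minimum is the one with the smallest intermediate charge. The single-variable optimization is entirely routine; I expect the only mildly delicate point to be the clean bookkeeping in this last comparison, namely recognizing that after subtracting the common terms the question collapses to comparing $\sqrt{q_1},\sqrt{q_2},\sqrt{q_3}$.
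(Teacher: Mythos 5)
Your proof is correct. The paper gives no proof at all --- it flags the lemma as ``well known and easy to prove by direct computation'' --- and your argument is exactly that computation carried out in full: the reduction to one variable via $d_{13}=\tfrac12$ (forced by alignment plus fixed perimeter), the cancellation of $q_2$ giving the stated proportion, strict convexity of $f$ on $(0,\tfrac12)$ for uniqueness and non-degeneracy, and the closed-form critical value $2q_k(\sqrt{q_i}+\sqrt{q_j})^2+2q_iq_j$ whose comparison via the common factor $\sqrt{q_1q_2q_3}$ cleanly justifies the ``smallest intermediate charge'' claim that the paper asserts without verification.
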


\medskip
Now let us return to the space $M(3)$. We use a shorter notation
$l_1 = d_{23}, l_2 = d_{31}, l_3 = d_{12}$. In this notation:
$$E = \frac{q_1 q_2}{l_3} + \frac{q_2 q_3}{l_1} +\frac{ q_3 q_1}{ l_2} . $$

\begin{thm}\label{ThmcritCoulomb} For any three positive charges, the Coulomb energy $E$
has  three obvious poles: $p_1=p_2, \ p_2=p_3,$ and $p_1=p_3$. The minima points and the saddle points
depend on the charges. More precisely,
\begin{enumerate}\item If the triple $(\frac{1}{\sqrt{q_1}}:\frac{1}{\sqrt{q_2}}:\frac{1}{\sqrt{q_3}})$
satisfies the strict triangle inequality then we have:
\begin{enumerate}
  \item $E$ has exactly two minima points. They correspond to mutually symmetric triangles whose sidelengths
  come from the proportion
  $(l_1:l_2:l_3)=(\frac{1}{\sqrt{q_1}}:\frac{1}{\sqrt{q_2}}:\frac{1}{\sqrt{q_3}})$.
  \item $E$ has exactly three  saddle Morse points which correspond to aligned configurations from
   Lemma \ref{LemmaDimOne}.
\end{enumerate}
  \item Assume that the triple $(\frac{1}{\sqrt{q_1}}:\frac{1}{\sqrt{q_2}}:\frac{1}{\sqrt{q_3}})$ does not satisfy
  the triangle inequality, namely
 $\frac{1}{\sqrt{q_1}}>\frac{1}{\sqrt{q_2}}+\frac{1}{\sqrt{q_3}}.$   Then we have:
\begin{enumerate}
  \item $E$ has exactly one minimum point. It corresponds  to the aligned configuration described
  in Lemma \ref{LemmaDimOne} with $p_1$ lying between $p_2$ and $p_3$.
  \item $E$ has exactly two  saddle Morse points.  They correspond to  remaining aligned configurations described
  in Lemma \ref{LemmaDimOne}  with either $q_2$ or $q_3$ lying between the two other charges.
\end{enumerate}

\item Assume that the triangle inequality becomes an equality:
\newline $\frac{1}{\sqrt{q_1}}=\frac{1}{\sqrt{q_2}}+\frac{1}{\sqrt{q_3}}.$   Then we have:
\begin{enumerate}
  \item $E$ has exactly one   degenerate minimum point. It corresponds  to the aligned configuration described
  in Lemma \ref{LemmaDimOne} with $p_1$ lying between $p_2$ and $p_3$.  It can be viewed as a meeting point of two minimum points and one saddle point.
  \item $E$ has exactly two non-degenerate saddle Morse points, as in the previous case.
\end{enumerate}
\end{enumerate}
\end{thm}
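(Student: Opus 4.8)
The plan is to pass to side-length coordinates, where $E=\tilde E(l_1,l_2,l_3)$ depends only on the shape through $l_1+l_2+l_3=1$, and to exploit that $M(3)\cong S^2$ is the double of the ``medial triangle''
$$\Delta=\{(l_1,l_2,l_3):\ l_i\ge 0,\ \textstyle\sum_i l_i=1,\ l_i\le\tfrac12\}$$
glued along its boundary, with quotient map $\pi\colon S^2\to\Delta$ the $2$-to-$1$ branched cover induced by the reflection involution $\nu$, branched exactly over the equator of aligned configurations $=\partial\Delta$. I would split the critical points into the non-aligned ones, lying over the interior of $\Delta$ (where $\pi$ restricts to a diffeomorphism on each sheet), and the aligned ones, lying on the branch locus.

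For the interior, I would note that $\tilde E=\sum_i q_jq_k/l_i$ is a sum of strictly convex functions of the $l_i$, hence strictly convex on the affine simplex $\{\sum l_i=1\}$, with $\tilde E\to\infty$ as any $l_i\to 0$; so it has a unique critical point there. Solving $\nabla\tilde E=\lambda\nabla(\sum l_i)$ gives $l_i^2\propto q_jq_k$, i.e. the proportion $(l_1:l_2:l_3)=(a_1:a_2:a_3)$ with $a_i:=1/\sqrt{q_i}$. This Lagrange point lies in the interior of $\Delta$ (all strict triangle inequalities hold) exactly when $(a_1:a_2:a_3)$ does, i.e. in Case (1), where $\pi^{-1}$ of it contributes the two mirror-image minima; by strict convexity these are nondegenerate local (indeed global) minima. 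In Cases (2)--(3) the Lagrange point lies outside or on $\partial\Delta$, so there are no interior critical points.

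For the aligned family I would use fold coordinates: near an equatorial point the cover reads $(s,t)\mapsto(s,t^2)=(s,u)$, and since the $l_i$ are smooth and $\nu$-even, they are smooth functions of $(s,u)$, whence $E(s,t)=\tilde E(s,t^2)$. Then $\partial_t E=2t\,\partial_u\tilde E$ vanishes on $t=0$ automatically, so the equatorial critical points of $E$ are precisely the critical points of $\tilde E|_{\partial\Delta}$; on each open edge (say $l_1=\tfrac12$) $\tilde E$ is strictly convex, giving exactly one, namely the Lemma~\ref{LemmaDimOne} point. Hence there are exactly three aligned critical points, each a tangential minimum, and the type of such a point is governed by the transverse Hessian $\partial_t^2E|_{t=0}=2\,\partial_u\tilde E$: a local minimum of $E$ when $\partial_u\tilde E>0$, a saddle when $\partial_u\tilde E<0$.

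The main obstacle is the sign of this transverse derivative. I would compute it along an explicit inward path (lifting the middle vertex $p_1$ to height $h$ and rescaling the base to keep the perimeter fixed), for which $\delta l_1=-(\delta l_2+\delta l_3)$ and $\delta l_2,\delta l_3\propto h^2$; since $\partial_s\tilde E=0$ at the critical point, only the normal component of the displacement contributes. Substituting the edge relations $l_2:l_3=a_2:a_3$, $l_2+l_3=\tfrac12$ of Lemma~\ref{LemmaDimOne}, the derivative turns out to have the sign of $a_1^2-(a_2+a_3)^2$. Thus the $p_1$-middle aligned point is a saddle when $a_1<a_2+a_3$ and a minimum when $a_1>a_2+a_3$, with the analogous statements for $p_2,p_3$ by symmetry; in Case (2), where $a_1$ is the largest, the other two inequalities go the opposite way, so those points stay saddles. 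This yields Case (1) (three saddles) and Case (2) (one minimum, two saddles), all nondegenerate, and I would check the totals against Morse theory on the pair of pants $\{E\le c\}=S^2\setminus(3\text{ disks})$, $\chi=-1$: Case (1) gives $2-3=-1$, Case (2) gives $1-2=-1$. For the threshold Case (3), $a_1=a_2+a_3$, the transverse derivative vanishes, so the $p_1$-middle point becomes a degenerate critical point; strict convexity of $\tilde E$ (whose global minimizer now sits on $\partial\Delta$) identifies it as the global, hence a degenerate local, minimum, arising as the merge of the two interior minima with the $p_1$-middle saddle, while the remaining two aligned points persist as nondegenerate saddles.
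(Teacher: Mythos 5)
Your proposal is correct, and it takes a genuinely different route precisely in the half of the argument where the real work lies. You and the paper share the first half: Lagrange multipliers in side-length coordinates give $l_1^2q_1=l_2^2q_2=l_3^2q_3$, i.e.\ $(l_1:l_2:l_3)=(a_1:a_2:a_3)$ with $a_i=1/\sqrt{q_i}$, and the evenness of $E$ in the transverse coordinate at an aligned configuration both reduces criticality there to the one-dimensional condition of Lemma \ref{LemmaDimOne} and diagonalizes the Hessian (the paper's local coordinates $(x,y)$ for the middle vertex are exactly your fold coordinates $(s,t)$). The paper, however, determines the \emph{types} of the aligned points indirectly: it invokes the count $\sharp(\mathrm{max})+\sharp(\mathrm{min})-\sharp(\mathrm{saddles})=2$ on $S^2$, with the three poles playing the role of maxima, combined with existence or non-existence of the interior Lagrange point; its transverse Hessian analysis (formula $(*)$) is only qualitative, showing $\partial^2E/\partial y^2>0$ for a small middle charge and nondegeneracy ``generically''. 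You instead compute the exact sign of the transverse derivative, and your claimed formula checks out: at the Lemma \ref{LemmaDimOne} point on the edge $l_1=\tfrac12$ one has $l_2=\frac{a_2}{2(a_2+a_3)}$, $l_3=\frac{a_3}{2(a_2+a_3)}$, and the inward normal derivative of $\tilde E$ along $(-1,\tfrac12,\tfrac12)$ equals $\frac{4}{a_1^2a_2^2a_3^2}\left(a_1^2-(a_2+a_3)^2\right)$, giving the threshold $a_1=a_2+a_3$ directly (this agrees with the paper's expansion $(*)$ after relabeling). Together with strict convexity of $\tilde E$ on the open simplex --- its Hessian $\mathrm{diag}(2q_2q_3/l_1^3,\,2q_3q_1/l_2^3,\,2q_1q_2/l_3^3)$ is positive definite, which is also what makes the interior minima nondegenerate; strict convexity alone would not suffice --- this classifies every critical point without the Euler-characteristic bookkeeping, which you rightly demote to a consistency check. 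Your version is strongest exactly where the paper is thinnest: it proves nondegeneracy away from the bifurcation walls rather than asserting it generically, and it actually derives case (3), including global minimality of the degenerate boundary point, which the paper only describes as a merging of two minima and a saddle. Two small points to tighten: the doubling map $S^2\to\Delta$ is a fold along the equator rather than a branched cover in the usual sense; and the step $E(s,t)=\tilde E(s,t^2)$ needs Whitney's theorem on smooth even functions plus nondegeneracy of the fold (i.e.\ $l_1=\tfrac12-cy^2+O(y^4)$ with $c>0$, which the paper's expansion makes explicit, and which also guarantees that the normal component of your quadratic displacement is a \emph{positive} multiple of the inward normal, so that the sign you computed is indeed the sign of $\partial_t^2E$). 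With those remarks in place, the argument is complete.
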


Proof. First assume that a non-aligned triangle is a critical point of Coulomb energy. Observe that with respect
to coordinates $(l_1, l_2, l_3)$ we have:
 $\nabla (E) = (- \frac{ q_1 q_2}{l_3^2} , -\frac{ q_2,q_3}{l_1^2} , - \frac{q_3 q_1}{l_2^2} )$.  The Langrange multiplier
method with constraint  $l_1 + l_2 + l_3 = 1$ gives:

$$ l_1^2 \; q_1\;  = \; l_2^2 \; q_2 \; = \; l_3^2 \; q_3,$$

which has a unique solution for given $(q_1, q_2, q_3)$. As soon as $(l_1, l_2, l_3)$ satisfy the strict triangle inequality
this is also a solution of our problem (case  1). Otherwise all critical configurations are aligned.

  Next we consider an aligned   configuration. Let us prove that it is critical in the plane if and  only if it is critical in dimension one.
 Assume that  the intermediate vertex is $p_2$, and denote the configuration by $P$. Since we can no longer
use $l_i$'s as local coordinates (as we did in the proof of Lemma \ref{LemmaDimOne}), we introduce other local coordinates on the space $M(3)$ in a neighborhood of $P$. Namely, for a configuration
close to $P$, we may assume that the points $p_1$ and $p_3$ lie on the $x$-axis. The position of $p_2$ uniquely defines the triangle, so we can take
the coordinates of $p_2$ as local coordinates. Since $E(x,y)=E(x,-y)$, we have a critical point as soon as the restriction to the x-axis is critical. Moreover, the Hessian matrix is diagonal  in  $(x,y)$  coordinates.

 Now we know that there  are exactly three  critical points that are aligned configurations.
To complete the proof one has  to use the relation  $\sharp$(maxima) $+$ $\sharp$(minima) $-$ $\sharp$(saddles)$=2.$ That is,

(1) If the triangle inequality holds, there exist two symmetric non-aligned critical points. Elementary calculations show that these are minimum points.
In this case the three aligned configurations are saddles.

(2) Otherwise, the minimum is one of the aligned configurations. Clearly it is the one with the smallest charge as the intermediate point. The other two are saddles.

\medskip

It only remains to check that the critical points are generically non-degenerate. Besides, the below detailed analysis of the determinant of the Hessian gives a better understanding of what's going on.
A similar approach is used in the subsequent section.

Assume that we have an aligned configuration with sidelengths\footnote{We change notation for brevity.} $a$ and $b$ as is depicted in
Figure \ref{FigAlignedTriang}, left. We know from Lemma \ref{LemmaDimOne} that $\frac{\partial^2E}{\partial x^2}>0$, so we are interested in the sign of  $\frac{\partial^2E}{\partial y^2}$. We show that the latter  depends on the charges.

So we take the one-parametric family of  configuration that depends symmetrically on $y$ (see Figure \ref{FigAlignedTriang}, right) and write:
$$l_3=\sqrt{(a-t)^2+y^2}= a-t+\frac{y^2}{2a}+o(y^2),$$  
$$l_1 =\sqrt{(b-s)^2+y^2}= b-s+\frac{y^2}{2b}+o(y^2),$$  
$$l_2=a+b-s-t=1/2-s-t.$$  

The perimeter   stays the same, so
$$2s+2t=\frac{y^2}{2a}+\frac{y^2}{2b}+o(y^2).$$
We may choose any symmetric family, so let us assume that $s=t$, which
implies $t=s=y^2(\frac{1}{8a}+\frac{1}{8b})$.
Substituting this in the Coulomb energy formula we get
$$E= \frac{q_1q_3}{\frac{1}{2}-(\frac{1}{4a}+\frac{1}{4b})y^2} +\frac{q_1q_2}{a-(\frac{1}{8a}+\frac{1}{8b})y^2+\frac{y^2}{2a}}
+\frac{q_3q_2}{b-(\frac{1}{8a}+\frac{1}{8b})y^2+\frac{y^2}{2b}}+ o(y^2). \ \ \ \ (*)$$

It suffices to make qualitative analysis of the formula.  First, we  conclude that generically,  $\frac{\partial^2E}{\partial y^2}$ is non-zero, and therefore, we have a Morse point.  We also see that the second derivative $\frac{\partial^2E}{\partial y^2}$
is positive if $q_2$ is relatively small, since in this case the derivative of  the first term is positive and majorates the other terms.\qed

\begin{figure}[h]\label{FigAlignedTriang}
\centering
\includegraphics[width=10 cm]{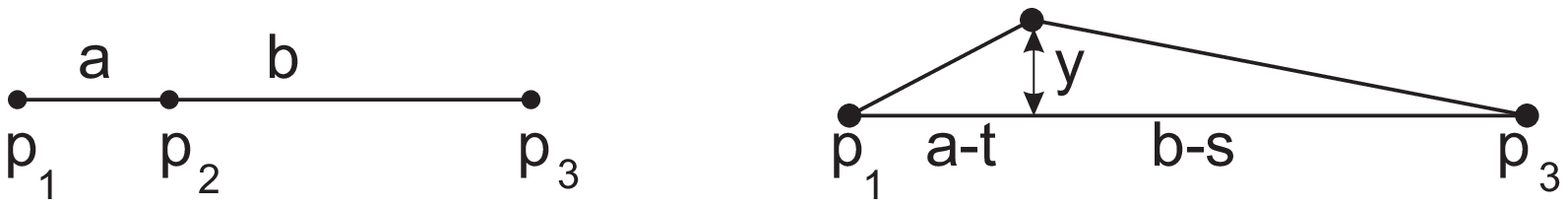}
\caption{}
\end{figure}

\medskip

\begin{figure}\label{Sail}
\centering
\includegraphics[width=12cm]{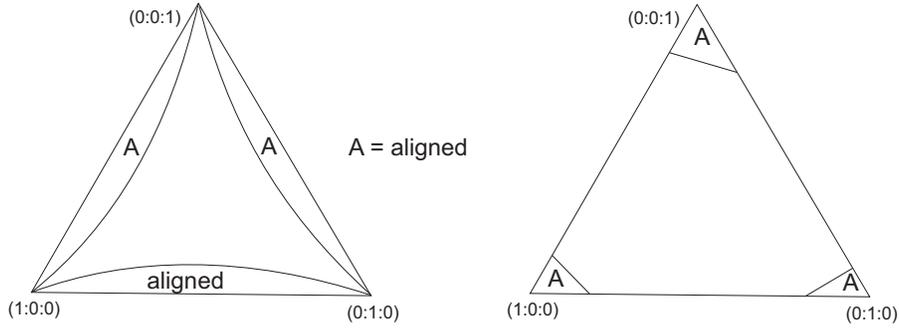}
\caption{The control triangle and the bifurcation curves for the pitchfork bifurcation;
left: Section \ref{s:perimeter}: perimeter constraint, right: Section \ref{s:3circles}: concentric circles}
\end{figure}

\bigskip

Now let us analyze the meaning of the above theorem from the bifurcation theory viewpoint.
By homogeneity we may assume that $q_1+q_2+q_3=1$. Since we only consider positive charges the control space
of our system is a triangle  $\Delta$. In this triangle the  bifurcation set is given by the equalities:
{
$$
\frac{1}{\sqrt{q_1}}=\frac{1}{\sqrt{q_2}}+\frac{1}{\sqrt{q_3}} \;  , \; \frac{1}{\sqrt{q_2}}=\frac{1}{\sqrt{q_3}}+\frac{1}{\sqrt{q_1}} \; , \;  \frac{1}{\sqrt{q_3}}=\frac{1}{\sqrt{q_1}}+\frac{1}{\sqrt{q_2}}
$$}

The middle part of $\Delta$ (see  Fig. 2, left) corresponds to the case (1) of the above theorem:
the minimum is achieved at two mutually symmetric non-degenerate triangles. For the other points of the
triangle  $\Delta$ all critical configurations are aligned.

\begin{figure}[h]\label{Pitch}
\centering
\includegraphics[width=12 cm]{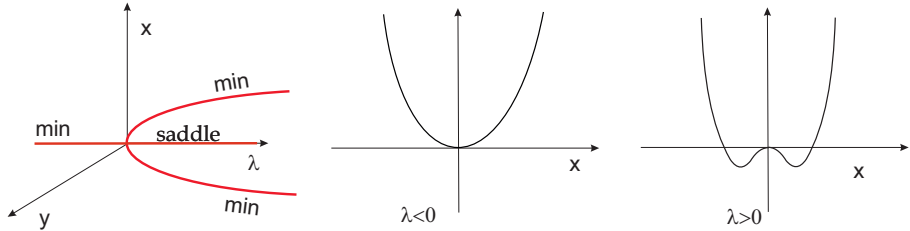}
\caption{The pitchfork bifurcation}
\end{figure}

The situation we encounter here is a typical example of a (supercritical) pitchfork bifurcation. In the single variable case this is
locally described by the potential $f(x,y,\lambda) = \frac{x^4}{4} - \lambda \frac{x^2}{2} + y^2$. Its critical set is given by $x^3 - \lambda x = 0, \; y=0 .$ So it consists of a parabola and a line is the $(x, \lambda)$-plane (see Fig. \ref{Pitch}). The points on the $\lambda$-axis are critical with value $0$ for all $\lambda$: a minumum for $\lambda < 0$ and a saddle point for  $\lambda  > 0$. The points on the parabola are minima with value $ -\frac{1}{2}\lambda ^2$. This is the typical situation for a point on the bifurcation set \cite{GS}.

We can conclude the following. Assume that we have the charge $q_2=0$  whereas the other two charges are non-zero.
Then the configuration $(p_1,p_2,p_3)$ considered as a physical system  becomes aligned with $p_1$ and $p_3$ at the endpoints,
and the position of $p_2$ does not matter. We begin to gradually increase the value of $q_2$. As soon as it is non-zero, $p_2$ takes
some specific position on the aligned configuration  (see Lemma \ref{LemmaDimOne}) (which does not depend on the value of $q_2$
while it is small), stays at the same place (we call this  \textit{fixing effect})
until we cross over the bifurcation set, where it moves away from the line. We get two triangles as minima and the aligned position
persists as a saddle point in the same position.

\medskip

\textbf{Remark}. In view of the above, the inverse problem is now solved straightforwardly.
Namely,

\begin{enumerate}
  \item If three points in the plane are not collinear, their stabilizing charges are defined uniquely up to a scale.
  \item If the three points are aligned, the   stabilizing charges are defined  not uniquely: the charges at the endpoints
  come from Lemma \ref{LemmaDimOne}, whereas the charge of the intermediate point has to be small enough.
\end{enumerate}

\section
 {General fixing effect at aligned positions}

The direct computation  for  $n$-gons (even with $n=4$) leads to complicated equations that do not give a transparent solution.
However some important qualitative observations can be done. An informal message of the below theorem is that for aligned configurations,
there exists  fixing effect of the Euler's Buckling Beam type.

\begin{thm}\label{ThmNtuple}
\begin{enumerate}
  \item No convex critical configuration  $P$ has three charges lying on a line unless $P$ is aligned. In other words, there is no fixing effect for non-aligned convex polygons.
  \item If $q_2,q_3,....,q_{n-1}$ are relatively small\footnote{This means that, for each pair $q_1,q_n$, there exists a positive number $\delta$ such that the statement of the theorem is true for all $q_2,...,q_{n-1}$ smaller than $\delta$.} with respect to the values of $q_1$ and $q_n$, then the Morse index of an  aligned configuration   equals the Morse index of the same configuration  in the case where the ambient space is $\mathbb{R}^1$.
                                                                       In particular, this means that minima points for the ambient space  $\mathbb{R}^1$ remain minima points for the ambient space $\mathbb{R}^2$, and we have the fixing effect for them.

\end{enumerate}
\end{thm}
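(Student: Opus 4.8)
The plan is to handle both parts through one mechanism — the balance between Coulomb repulsion and rope tension at a critical configuration — used in opposite directions. For part (1) I would read the Lagrange condition as a force balance and show that collinearity of three consecutive vertices kills the tension term, leaving an unbalanced transverse Coulomb force. For part (2) I would use the reflection involution $\nu$ to block-diagonalize the Hessian at an aligned configuration into an ``in-line'' block (which is exactly the $\mathbb{R}^1$ Hessian) and a ``transverse'' block, so that the whole statement reduces to showing the transverse block is positive definite when $q_2,\dots,q_{n-1}$ are small.

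For part (1): at a constrained critical point the Lagrange condition gives, at each vertex $p_i$, that $\nabla_{p_i}E=\mu\,\nabla_{p_i}L$, where $L=\sum_k|p_kp_{k+1}|$ is the perimeter and $\nabla_{p_i}L$ is the sum of the two unit edge-vectors meeting at $p_i$. If $p_{i-1},p_i,p_{i+1}$ are collinear with $p_i$ between its neighbours — which, in a convex polygon, is the only way three vertices can be collinear and forces them to be consecutive — those two unit vectors are antiparallel, so $\nabla_{p_i}L=0$ and hence $\nabla_{p_i}E=0$: the total Coulomb force on $p_i$ vanishes. Now convexity enters: the line through $p_{i-1},p_i,p_{i+1}$ is a supporting line of $P$, so every other charge lies in one closed half-plane. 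Each repulsive contribution then pushes $p_i$ toward the opposite side, so the transverse component of $\nabla_{p_i}E$ is a sum of terms of one strict sign and can vanish only if every other vertex also lies on the line, i.e.\ $P$ is aligned. (For $n=3$, ``three collinear charges'' already means aligned, so there is nothing to prove.)

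For part (2): since $E(x,y)=E(x,-y)$, the Hessian at an aligned configuration is block diagonal, and the $\mathbb{R}^2$ Morse index is the $\mathbb{R}^1$ index plus the Morse index of the transverse block; it suffices to show the transverse block is positive definite. Writing the transverse displacements as $y=(y_1,\dots,y_n)$ and differentiating $E-\mu L$ twice at $y=0$, I would obtain a difference of two weighted graph-Laplacian forms,
$$Q(y)=-\sum_{i<k}\frac{q_iq_k}{d_{ik}^3}\,(y_i-y_k)^2+|\mu|\sum_i\frac{(y_i-y_{i+1})^2}{d_{i,i+1}},$$
where the first (destabilizing) sum runs over all pairs, the second (tension) sum over the polygon edges, and $\mu<0$ is the multiplier. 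I would then pass to the limit $q_2,\dots,q_{n-1}\to0$: every complete-graph weight except $w_{1n}=q_1q_n/d_{1n}^3$ vanishes, the endpoint equilibrium gives $d_{1n}=\tfrac12$ and $|\mu|=2q_1q_n$, and $Q$ converges to a fixed form $Q_0$ of order $q_1q_n$. Positive definiteness of the transverse block for small middle charges then follows by continuity, once $Q_0$ is shown positive definite on the tangent space.

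The crux — and the step I expect to be the main obstacle — is the positivity of $Q_0$. Its destabilizing term is $-8q_1q_n(y_1-y_n)^2$, while the tension term splits into the long closing edge, contributing $4q_1q_n(y_n-y_1)^2$, and the short forward edges, whose lengths sum to $\tfrac12$. By Cauchy--Schwarz,
$$(y_n-y_1)^2=\Big(\sum_{i=1}^{n-1}(y_{i+1}-y_i)\Big)^2\le\Big(\sum_{i=1}^{n-1} d_{i,i+1}\Big)\sum_{i=1}^{n-1}\frac{(y_{i+1}-y_i)^2}{d_{i,i+1}}=\frac12\sum_{i=1}^{n-1}\frac{(y_{i+1}-y_i)^2}{d_{i,i+1}},$$
so the forward edges contribute at least another $4q_1q_n(y_n-y_1)^2$; the two tension contributions therefore total at least $8q_1q_n(y_n-y_1)^2$ and exactly cancel the destabilizing term, giving $Q_0\ge0$. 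The delicate point is the equality case: $Q_0(y)=0$ forces the Cauchy--Schwarz equality $(y_{i+1}-y_i)/d_{i,i+1}=\text{const}$, i.e.\ $y_i$ affine in the position of $p_i$, which is precisely the span of the infinitesimal translation and rotation factored out in $M(n)$. Hence $Q_0$ is strictly positive on the genuine tangent space, and the exact matching of the sharp Cauchy--Schwarz constant with the ratio between $d_{1n}^3$ (in the destabilizing weight) and $d_{1n}$ (in the tension weight) is exactly what makes the fixing effect go through.
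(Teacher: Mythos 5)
Your part (1) is correct and is essentially the paper's argument in dual form: the paper exhibits an explicit perimeter-preserving infinitesimal motion of the flat middle vertex that strictly decreases $E$ (all other vertices lie on one side by convexity), while you phrase the same first-order fact as a force balance --- $\nabla_{p_i}L=0$ at a flat vertex, so the one-signed transverse Coulomb force must vanish. (Your observation that convexity forces the three collinear vertices to be consecutive is a point the paper leaves implicit, and is worth stating.) Your part (2) also shares the paper's skeleton: the reflection symmetry block-diagonalizes the Hessian, and the stabilizing mechanism is the big $q_1q_n$ interaction transmitted through the perimeter constraint; your Lagrange second-variation form $Q$ with the sharp Cauchy--Schwarz cancellation is a cleaner, more structural packaging of the paper's explicit family with shifts $s=t$.

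However, your limiting step has a genuine gap for $n\ge 4$: the aligned critical configuration does \emph{not} converge to a nondegenerate one as $q_2,\ldots,q_{n-1}\to 0$. Each intermediate charge, to leading order, balances the fields of $q_1$ and $q_n$ alone, and the balance point $x^*$ (given by $q_1/x^2=q_n/(\tfrac12-x)^2$) is the \emph{same} for every intermediate vertex, independently of its charge; hence all middle points coalesce at $x^*$, with mutual separations of order $q^{1/3}$. Consequently the ``fixed form $Q_0$'' is ill-defined (the forward edge lengths $d_{i,i+1}$ tend to $0$), the middle-edge tension coefficients $|\mu|/d_{i,i+1}$ blow up rather than converge, and the destabilizing weights $q_iq_k/d_{ik}^3$ for middle pairs tend to $0$ only because of the precise equilibrium asymptotics $d_{ik}^3\sim q_iq_k/\kappa$ --- an estimate you never establish and which ``continuity'' cannot replace, all the more so because your leading terms cancel \emph{exactly}, so positivity hinges on the sign of precisely these uncontrolled corrections. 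The repair is to argue at finite small charges with configuration-uniform bounds, most easily in the paper's gauge $y_1=y_n=0$ (endpoints pinned to the axis): there the destabilizing $w_{1n}$-term vanishes identically, and since each forward edge has $d_{i,i+1}\le\tfrac12$ while $\sum d_{i,i+1}=\tfrac12$, one gets the uniform lower bound
$$|\mu|\sum_{\mathrm{fwd}}\frac{(y_{i+1}-y_i)^2}{d_{i,i+1}}\;\ge\;2|\mu|\sum_{\mathrm{fwd}}(y_{i+1}-y_i)^2\;\ge\;c\,q_1q_n\|y\|^2,$$
valid no matter how the middle points sit (here $|\mu|\to 2q_1q_n$ is unproblematic, being fixed by the force balance at $p_1$, whose distances to all other points stay bounded below); it then remains only to show the middle weights are $o(1)$, which is exactly the equilibrium estimate above. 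For $n=3$ there is a single middle point, no coalescence occurs, and your argument is fine as written. It is fair to add that the paper's own proof is silent on the same uniformity issue (it is hidden in the claim that the first summand ``majorates'' $R$); your formulation has the merit of making visible precisely what must be estimated.
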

Proof. (1) Assume the contrary. In this case there exists an infinitesimal motion (that is, an element of the tangent space $T_PM(n)$) which increases some of diagonals with non-zero derivative, whereas  derivatives of all the rest of of pairwise distances is zero, see Fig. \ref{FigConvex}.

\begin{figure}[h]
\centering
\includegraphics[width=4 cm]{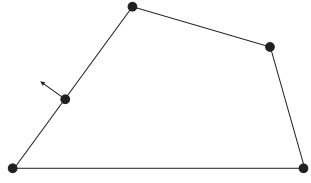}
\caption{This infinitesimal motion  gives non-zero derivative of $E$.
}
\label{FigConvex}
\end{figure}

(2) Let us take an aligned configuration which is a critical point. We can assume that it lies on the $x$-axis. Assume that its endpoints are $p_1$ and $p_n$. In its neighborhood in the space $M(n)$ we choose $(n-2)$ coordinates that correspond to  $x$-coordinates of $p_2,...,p_{n-1}$. The other $(n-2)$ coordinates correspond to $y$-coordinates of the same points. Symmetry arguments $E(\overrightarrow{x},\overrightarrow{y})=E(\overrightarrow{x},-\overrightarrow{y})$ imply that the Hessian matrix is a block  matrix
 $$\left(
     \begin{array}{cc}
       H_1 & 0 \\
       0 & H_2 \\
     \end{array}
   \right),
 $$
 where $H_1$ is the Hessian with respect  to the $x$-coordinates, that is,  the Hessian associated with the one-dimensional ambient space.
 We are going to show  that the Hessian matrix $H_2$  related to the coordinates $y_i$ is positively definite provided that  $q_2,q_3,....,q_{n-1}$ are relatively small. Let us assume that $n=4$; the same approach works for $n>4$.

\begin{figure}[h]
\centering
\includegraphics[width=8 cm]{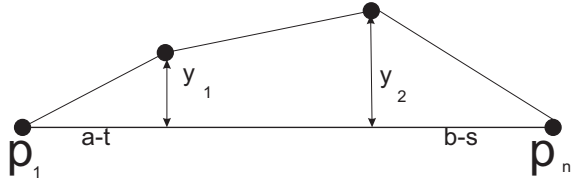}
\caption{Notation for the proof of Theorem \ref{ThmNtuple}
}
\label{FigAlignedN}
\end{figure}

We mimic the proof of  formula $(*)$. Namely, we use the symmetry property of $E$ and choose two shifts $y_1$ and $y_2$.
We can choose the coordinates in any symmetric way with respect to $(y_1,y_2)$, so let us take $s=t$.
Using notations of  Fig. \ref{FigAlignedN}, from the fixed perimeter condition we conclude that
$$s=t=\frac{1}{2}\Big(\frac{y_1^2}{2a}+\frac{y_2^2}{2b}+\frac{(y_1-y_2)^2}{2b}\Big).$$ Therefore
$$E=\frac{q_1q_4}{1-\Big(\frac{y_1^2}{2a}+\frac{y_2^2}{2b}+\frac{(y_1-y_2)^2}{2b}\Big)} +R.$$
Here $R$ is a sum of fractions with smaller numerator and a denominator that quadratically depend on $y_i$.
The first summand  has an obviously positively definite Hessian and majorates the rest of the summands $R$.\qed

\section{Concentric orbitally constrained triples of charges}
\label{s:3circles}
We consider three concentric circles with radii $r_1,r_2,r_3$ and three charges $q_1,q_2,q_3$ .
The correspodning configuration space $T(r_1,r_2,r_3)$ can be identified with a $2$-torus.
Let us fix  some notation  (see  Fig. 6):

$$d_1 = |p_2p_3| \; , \;   d_2 = |p_3p_1| \;  , \; d_3 = |p_1p_2| , $$
$$  \alpha_1 = \angle (p_2p_0p_3) \; , \;  \alpha_2 = \angle (p_3p_0p_1) \; , \;   \alpha_3 = \angle (p_1p_0p_2). \\
$$
Note that $d_i$ depends on $\alpha_i$ via the cosine rule,  e.g.  $d_3^2 = r_1^2+r_2^2-2r_1r_2 \cos \alpha_3$.

\begin{figure}[h]\label{Octopus}
\centering
\includegraphics[width=8 cm]{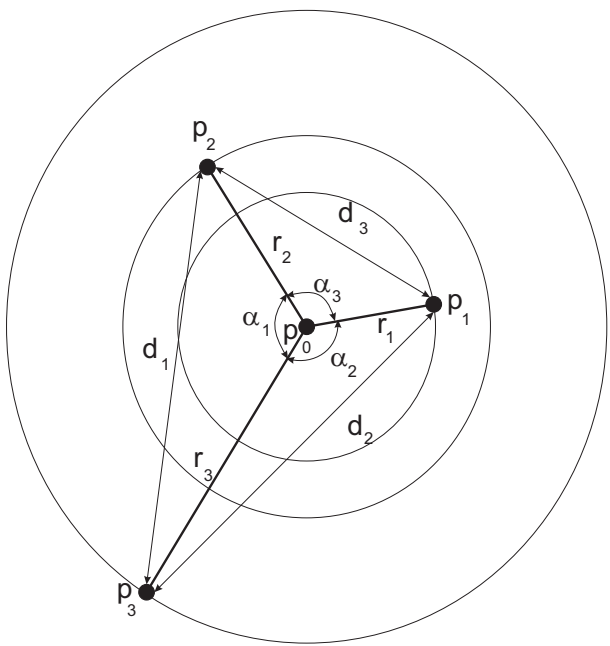}
\caption{A configuration of concentrically constrained triple}
\end{figure}

We suppose that all charges are positive (but similar analysis works also if some of the charges are negative) and
that all $r_i$ are different.
The Coulomb energy
$$  E = \frac{q_1 q_2}{d_3} + \frac{q_2 q_3}{d_1} + \frac{q_3 q_1}{d_2}  $$
is thus a bounded function on the $2$-torus for each $q_1,q_2,q_3$.

\begin{prop} \label{crit3LC}
The critical points of $E$ are given by:

$$ \frac{\sin \alpha_1}{d_1^3r_1q_1} = \frac{\sin \alpha_2}{d_2^3r_2q_2} = \frac{\sin \alpha_3}{d_3^3r_3q_3}$$
\end{prop}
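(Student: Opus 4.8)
The plan is to introduce angular coordinates on the torus and reduce the problem to a constrained optimization to which Lagrange multipliers apply cleanly.

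First I would record a configuration by the angular positions $\theta_1,\theta_2,\theta_3$ of the points $p_i$ on their respective circles. After the $SO(2)$ quotient only angle differences matter, and these are encoded by the three central angles $\alpha_1,\alpha_2,\alpha_3$, where $\alpha_i=\angle(p_jp_0p_k)$ is the angle subtended at the center by the two points other than $p_i$. These angles satisfy the single relation $\alpha_1+\alpha_2+\alpha_3=2\pi$, since they partition the full turn about $p_0$; this sum is constant along the torus. I would check that, away from the loci where points coincide, the triple $(\alpha_1,\alpha_2,\alpha_3)$ subject to this linear constraint gives valid local coordinates on the $2$-torus, so that critical points of $E$ are precisely the constrained critical points of $E(\alpha_1,\alpha_2,\alpha_3)$.

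The key simplification is that each distance $d_i$ depends on exactly one of the angles, through the cosine rule recorded in the text, e.g. $d_1^2=r_2^2+r_3^2-2r_2r_3\cos\alpha_1$. Differentiating yields $\partial d_1/\partial\alpha_1=(r_2r_3\sin\alpha_1)/d_1$, and cyclically for $d_2,d_3$. Since in the energy the summand $q_2q_3/d_1$ is the only one containing $\alpha_1$, and likewise for the others, the partial derivatives decouple:
$$\frac{\partial E}{\partial\alpha_1}=-\frac{q_2q_3\,r_2r_3\sin\alpha_1}{d_1^3},\qquad \frac{\partial E}{\partial\alpha_2}=-\frac{q_3q_1\,r_3r_1\sin\alpha_2}{d_2^3},\qquad \frac{\partial E}{\partial\alpha_3}=-\frac{q_1q_2\,r_1r_2\sin\alpha_3}{d_3^3}.$$

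Finally I would impose the constraint $\alpha_1+\alpha_2+\alpha_3=2\pi$, whose gradient is $(1,1,1)$. The Lagrange condition $\nabla E=\mu(1,1,1)$ forces the three expressions above to be equal; dividing their common value by $q_1q_2q_3\,r_1r_2r_3$ produces exactly
$$\frac{\sin\alpha_1}{d_1^3 r_1 q_1}=\frac{\sin\alpha_2}{d_2^3 r_2 q_2}=\frac{\sin\alpha_3}{d_3^3 r_3 q_3},$$
which is the asserted relation. The only step requiring real care is the coordinate bookkeeping: the cosine rule only sees $\cos\alpha_i$, so one must fix a consistent orientation to make the signed central angles well-defined smooth functions, verify that $\sum\alpha_i=2\pi$ holds locally, and keep track of the sign of $\sin\alpha_i$ as the configuration moves and the rays pass through one another. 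Once this is settled, the critical-point equation is an immediate consequence of the Lagrange computation and extends across the torus by continuity.
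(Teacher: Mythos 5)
Your proposal is correct and follows exactly the paper's route: the paper's own proof is the one-line application of the Lagrange multiplier method to $E$ in the coordinates $\alpha_1,\alpha_2,\alpha_3$ with the constraint $\alpha_1+\alpha_2+\alpha_3=2\pi$, and your computation of $\partial d_i/\partial\alpha_i$ via the cosine rule and the subsequent division by $q_1q_2q_3\,r_1r_2r_3$ is precisely the calculation the paper leaves implicit. Your extra care about orientation and the sign of $\sin\alpha_i$ is a reasonable elaboration, not a deviation.
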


Proof.
This follows by applying Lagrange multiplier  method to $E$ with the constraint $\alpha_1 +\alpha_2 + \alpha_3 = 2 \pi$.\qed

\medskip

An immediate corollary is that the aligned positions (where  $\alpha_i \equiv 0$ modulo $\pi$)  are among the critical points.
There are four aligned  configurations (see Fig. 7); let us  study them first.

\begin{figure}[h]\label{AlignedOct}
\centering
\includegraphics[width=10cm]{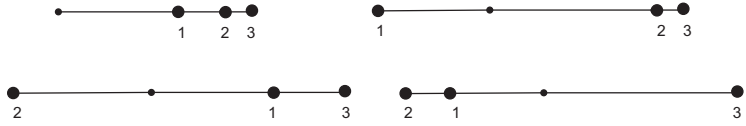}
\caption{Aligned configurations}
\end{figure}
Using a parametrization via $\alpha_1$and $\alpha_2$ we can compute the first and second derivatives of $E$, and evaluate them at
the corresponding critical point. The matrix of second derivatives is (in the aligned case):
$$
\left(
  \begin{array}{cc}
 -q_1q_2 \frac{r_1r_2}{d_3^3} \cos \alpha_3    q_2q_3 \frac{r_2r_3}{d_1^3} \cos \alpha_1 & -q_1q_2 \frac{r_1r_2}{d_3^3} \cos \alpha_3 \\
 q_1q_2 \frac{r_1r_2}{d_3^3} \cos \alpha_3   & -q_1q_2 \frac{r_1r_2}{d_3^3} \cos \alpha_3 -q_3q_3 \frac{r_3r_1}{d_2^3} \cos \alpha_2

  \end{array}
\right)
$$
The Hessian determinant (up to a positive multiple)  becomes:
$$ H(\alpha_1,\alpha_2,\alpha_3)  =  \frac{r_1 }{d_2^3 d_3^3}q_1 \cos \alpha_2 \cos\alpha_3 + \frac{r_2 }{d_3^3 d_1^3}q_2 \cos \alpha_3 \cos\alpha_1 + \frac{r_3  }{d_1^3 d_2^3}q_3 \cos \alpha_1 \cos\alpha_2
$$
Assume that  $\alpha_1 =\alpha_2= \pi \; , \; \alpha_3 = 0$.
In this case we have
$$ d_1 = r_2 + r_3 \; , \; d_2 = r_1 - r_3, \; \; d_3 = r_2 - r_1. $$
Therefore we have a linear form
$$
H(\pi,\pi,0)  = -A_1 q_1 - A_2q_2 + A_3q_3   \; \; \; ( A_i > 0)
$$
It  follows that, for  $q_3$ big and $q_1,q_2$ small, the Hessian is positive and gets negative on the other side of the line $H = 0$ in the affine triangle $\Delta$. The first case corresponds to a local  minimum; the second to a saddle point. Again we meet an example of the {\it pitchfork bifurcation}: fix $q_1,q_2$ small and let $q_3$ start big and decrease next. The system stays at equilibrium minimum in the aligned position until it meets $H=0$; after that the aligned position become a saddle and two minima are born as symmetric triangles.

We give now the final result for all aligned situations.

\begin{thm}\label{TheoremCritConcentric}
All aligned configurations are critical points of the Coulomb energy. More precisely,
\begin{enumerate}
\item If $\alpha_1=\alpha_2=\alpha_3=0$ then we have a non-degenerate maximum (which is the absolute maximum) for all $(q_1,q_2,q_3)$.
\item  The other three cases depend each on $(q_1,q_2,q_3)$: we have a pitchfork bifurcation, which transforms a minimum into a saddle point.
\end{enumerate}
\end{thm}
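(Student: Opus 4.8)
The plan is to read everything off the $2\times 2$ Hessian and its determinant $H$ recorded just before the statement. I abbreviate the three products on the diagonal by $A = q_1q_2\frac{r_1r_2}{d_3^3}\cos\alpha_3$, $B = q_2q_3\frac{r_2r_3}{d_1^3}\cos\alpha_1$, $C = q_3q_1\frac{r_3r_1}{d_2^3}\cos\alpha_2$, so that in the aligned case the Hessian is $\begin{pmatrix}-A-B & -A\\ -A & -A-C\end{pmatrix}$ and a one-line computation gives $\det = AB+BC+CA = (q_1q_2q_3r_1r_2r_3)\,H$; in particular the sign of the determinant equals the sign of $H$. Since each $d_i$ enters only through $\cos\alpha_i$, and in the aligned case $\alpha_i\in\{0,\pi\}$ with $\alpha_1+\alpha_2+\alpha_3=2\pi$, there are exactly four aligned configurations: the one with all points on a common ray (all $\cos\alpha_i=1$) and the three in which a single point is antipodal to the other two (one $\cos\alpha_i=1$, the other two equal to $-1$). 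By Proposition \ref{crit3LC} all four are critical points.

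For statement (1), all $\cos\alpha_i=1$. I would first argue geometrically that this is the absolute maximum: $d_i^2=r_j^2+r_k^2-2r_jr_k\cos\alpha_i$ is minimal precisely when $\cos\alpha_i=1$, so all three distances attain their global minima simultaneously, and hence every summand of $E$, and $E$ itself, is maximized at once. Non-degeneracy is then immediate from the Hessian, since here $A,B,C>0$ give $\det=AB+BC+CA>0$ while the trace $-2A-B-C<0$, so the matrix is negative definite.

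For statement (2) take, say, the configuration with $p_3$ isolated, so $\cos\alpha_3=1$ and $\cos\alpha_1=\cos\alpha_2=-1$; then $A>0$, $B<0$, $C<0$, and $H=-A_1q_1-A_2q_2+A_3q_3$ is a linear form with all $A_i>0$. Its zero locus is a line cutting the control triangle $\Delta$ into two regions, and off this line $\det\neq0$, so the critical point is Morse. The crux, and the step I expect to be the main obstacle, is to show that the side $\det>0$ gives a minimum rather than a maximum. I would settle this algebraically: negative definiteness would require both diagonal entries negative, that is $|B|<A$ and $|C|<A$, but then $\det=|B||C|-A(|B|+|C|)<0$, a contradiction. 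Hence $\det>0$ forces positive definiteness (a minimum) while $\det<0$ gives a saddle; on the side of $H=0$ where the isolated charge $q_3$ dominates we obtain a minimum, and on the other side a saddle.

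It remains to recognise the transition across $H=0$ as a supercritical pitchfork. On $H=0$ the Hessian acquires a single zero eigenvalue, while the reflection involution $\nu$ fixes every aligned configuration; matching this with the normal form $f=x^4/4-\lambda x^2/2+y^2$ of Section \ref{s:perimeter} (with $\lambda$ increasing as $H$ decreases through $0$), the aligned configuration switches from a minimum to a saddle as the charges cross into $H<0$, and precisely on that side a $\nu$-symmetric pair of non-aligned (triangle) minima is born. The same sign bookkeeping applies verbatim, after permuting indices, to the two remaining isolated-point configurations, which completes the proof.
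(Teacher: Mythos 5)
Your proposal is correct and follows the paper's own route: compute the $2\times 2$ Hessian at the four aligned configurations and read off the type of each critical point from the sign of its determinant, which (up to the positive factor $q_1q_2q_3r_1r_2r_3$) is exactly the paper's linear form $H$, treated as a linear function on the control triangle $\Delta$. You in fact supply two details the paper leaves implicit --- the algebraic step ruling out negative definiteness when $\det>0$ in the mixed-sign cases (so $H>0$ really yields a minimum rather than a maximum), and the simultaneous-minimization-of-distances argument showing the all-on-one-ray configuration is the \emph{absolute} maximum --- and you tacitly correct the sign typos in the paper's displayed Hessian and in its labeling of the aligned cases (the paper's $(0,\pi,0)$ and $(\pi,0,0)$ should be the configurations with $p_2$, respectively $p_1$, antipodal to the other two points).
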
\label{h-lines}
\begin{proof}
$H$ has following form for each of the four cases (all constants below depend on the $r_i$'s and are positive)
\begin{enumerate}
\item $(\alpha_1,\alpha_2,\alpha_3) = (\pi,\pi,0) :  H = -A_1 q_1 - A_2q_2 + A_3q_3$
\item $(\alpha_1,\alpha_2,\alpha_3) = (0,\pi,0) :  H = -B_1 q_1 + B_2q_2 - B_3q_3$
\item $(\alpha_1,\alpha_2,\alpha_3) = (\pi,0,0) : H = C_1 q_1 - C_2q_2 - C_3q_3$
\item $(\alpha_1,\alpha_2,\alpha_3) = (0,0,0) : H = D_1 q_1 + D_2q_2 + D_3q_3$
\end{enumerate}
The sign of the Hessian determines the type of equilibrium. It can be computed from the $q_i$'s and $r_j$'s.
In the first case the linear function $H$ will change sign for positive $q_i$'s. In the last case $H > 0$  for all $q_i$'s.
\end{proof}

Also here the control space (with normalized charges) is a triangle. The zero sets of the linear functions from Theorem \ref{TheoremCritConcentric}
cut out three small triangles around vertices of this triangle (see Fig. 2, right). They don't intersect. Each of the triangles corresponds to a minimum state.

Unlike the constrained problem discussed in Section 3,  the number of
the non-aligned critical configurations is difficult to analyze. They are  solutions of the equation in Proposition \ref{crit3LC}, which seems difficult to solve by hand. We proceed with discussing a few qualitative aspects and mention the results of some tests with Mathematica.

First, the non-degeneracy of $E$ and Betti numbers of the torus imply that there are at least four critical points with sum of their
Morse indices is equal to zero. In case of one minimum, two saddles and one maximum we have an exact Morse function.
From Theorem \ref{TheoremCritConcentric} we have already four critical points, which includes a single maximum. If the three other points
all are  saddle points (where this happens can be computed from the above Hessians) then there must be at least two minima which correspond to
(non-aligned) triangles. They come in couples via symmetry with the same critical value. Computer experiments suggest that there are no more
critical points.

If one of the aligned critical points is a minimum, then two saddles and one maximum  could suffice. This is supported by computer tests. Direct computations show that no more aligned minima appear. The existence of the pitchfork bifurcation also implies the existence of (at least) 2 minima for certain values of the $q_i$'s.

Observe that the Hessian also depends on $r_1,r_2,r_3$. Changing these values will also influence the position of critical configurations,
including the effect of pitchfork bifurcation. An interesting point is also to take the limit to the equal radius case.
In this case we have the following proposition.

\begin{prop}  For three positive equal radii and charges,
the Coulomb energy has a unique minimal value, which is achieved at a equilateral triangle.
This corresponds to two symmetric non-degenerate critical points in the state space.
\qed
\end{prop}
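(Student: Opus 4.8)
The plan is to exploit the fact that equal radii force all three charges onto a single circle of radius $r$, so that each chordal distance becomes an explicit function of the central angle alone. First I would set $r_1=r_2=r_3=r$ and $q_1=q_2=q_3=q$, parametrize a configuration by the three central gaps $\alpha_1,\alpha_2,\alpha_3\ge 0$ between consecutive points (subject to $\alpha_1+\alpha_2+\alpha_3=2\pi$, as in Proposition \ref{crit3LC}), and use the chord formula $d_i=2r\sin(\alpha_i/2)$ to rewrite the energy as
$$E=\frac{q^2}{2r}\sum_{i=1}^3\frac{1}{\sin(\alpha_i/2)}.$$
Minimizing $E$ thus amounts to minimizing $\phi(\alpha_1,\alpha_2,\alpha_3)=\sum_i\csc(\alpha_i/2)$ over the open simplex $\{\alpha_i>0,\ \sum_i\alpha_i=2\pi\}$.

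The central observation is that $\alpha\mapsto\csc(\alpha/2)$ is strictly convex on $(0,2\pi)$ and tends to $+\infty$ as $\alpha\to 0^+$. Hence $\phi$ is strictly convex and proper on the simplex, so it attains a unique interior minimum; by the invariance of $\phi$ under permuting the $\alpha_i$, that minimum must be the equal-angle point $\alpha_1=\alpha_2=\alpha_3=2\pi/3$, i.e.\ the equilateral triangle. Equivalently, the critical equation of Proposition \ref{crit3LC} reduces in this symmetric case to the condition that $\cos(\alpha_i/2)/\sin^2(\alpha_i/2)$ be equal for all $i$, and since $\alpha\mapsto\cos(\alpha/2)/\sin^2(\alpha/2)$ is strictly monotone on $(0,2\pi)$ this again forces $\alpha_1=\alpha_2=\alpha_3$. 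Non-degeneracy is then immediate: the Hessian of $\phi$ is diagonal with strictly positive entries, so its restriction to the constraint plane $\sum_i\delta\alpha_i=0$ is positive definite, giving a non-degenerate Morse minimum.

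It remains to pass from the simplex picture to the configuration space $T(r,r,r)$ and count the minima correctly. Writing the angular positions as $0,\theta_2,\theta_3$ after the $SO(2)$-reduction, the passage $(\theta_2,\theta_3)\mapsto(\alpha_1,\alpha_2,\alpha_3)$ is affine, so $E$ is strictly convex and proper in these torus coordinates on each region where the cyclic order is fixed. The three collision loci $\theta_2=0$, $\theta_3=0$, $\theta_2=\theta_3$ (the poles of $E$) cut the torus into exactly two open chambers, distinguished by the orientation of the labelled points; each chamber is a triangle on which $E$ is the strictly convex proper function above, hence contains precisely one critical point, namely its unique minimum. The two resulting equilateral configurations are interchanged by the reflection involution $\nu$ and share the common minimal value, which yields the two symmetric non-degenerate minima of the statement.

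I expect the main obstacle to be not the convexity estimate itself but the global bookkeeping: verifying that the three collision curves really separate the torus into exactly two chambers (so that the count is two rather than, say, six), and checking that no finite-energy critical point is lost in the limit of equal radii. Indeed, unlike the generic case, when the radii coincide a line meets the circle in at most two points, so the aligned configurations of Theorem \ref{TheoremCritConcentric} degenerate into the poles; confirming that no genuine critical point survives this collision, and that the convex interior minimum is the only critical point in each chamber, is the step that needs the most care.
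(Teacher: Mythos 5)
Your proof is correct, and its main line of argument is genuinely different from the paper's. The paper proceeds computationally: it specializes the Lagrange conditions of Proposition \ref{crit3LC} to equal radii and charges, reduces them by direct computation to $\cos\alpha_1=\cos\alpha_2=\cos\alpha_3$, finds the equilateral triangle as the only non-aligned solution, checks non-degeneracy by evaluating the Hessian at that point (obtaining the value $\frac{25}{144}$), and dismisses the aligned configurations by noting that for equal radii at least two points coincide, so those are poles. Your secondary remark --- that $\frac{\sin\alpha}{d^3}\propto\frac{\cos(\alpha/2)}{\sin^2(\alpha/2)}$ is strictly monotone on $(0,2\pi)$, forcing $\alpha_1=\alpha_2=\alpha_3$ --- is essentially the paper's reduction in a cleaner form (one checks $\frac{d}{du}\bigl(\cos u/\sin^2 u\bigr)=-\,(1+\cos^2 u)/\sin^3 u<0$ for $u=\alpha/2\in(0,\pi)$). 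But your headline argument replaces the computation altogether: writing $E=\frac{q^2}{2r}\sum_i\csc(\alpha_i/2)$ and using strict convexity of $\csc(\alpha/2)$ on $(0,2\pi)$, you obtain existence, uniqueness (via permutation symmetry), and non-degeneracy of the minimum in one stroke, with no Hessian evaluation needed. Your two-chamber decomposition of the torus (the collision circles $\theta_2=0$, $\theta_3=0$, $\theta_2=\theta_3$ meet only at one point and bound exactly two open triangles, each identified affinely with the open simplex) even yields a conclusion the paper does not claim: the two symmetric equilateral configurations are the \emph{only} critical points of $E$ on all of $T(r,r,r)$, since $E$ is strictly convex and proper on each chamber, and this is consistent with Morse theory because each chamber is contractible. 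One small note: the concern you flag in your last paragraph --- that a finite-energy critical point might be lost in the equal-radius limit --- is already fully dispatched by this chamber argument, since every non-pole configuration lies in one of the two chambers; and your observation that a line meets a single circle in at most two points is exactly the paper's reason why the aligned critical points of Theorem \ref{TheoremCritConcentric} degenerate into poles. In short, the paper's computation buys an explicit Hessian value, while your convexity argument buys a complete, computation-free global census of the critical points.
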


\begin{proof}
We use the equalities from Proposition \ref{crit3LC} and reduce them by direct computation to $\cos \alpha_1 = \cos \alpha_2 = \cos \alpha_3$.
We find as only non-aligned solution the equilateral triangle. The Hessian at the critical points takes value $\frac{25}{144}$, with non-degeneracy a consequence. In the aligned cases at least two points coincide and the energy has  poles with value infinity.

\end{proof}

As a consequence we have that,
  for almost equal radii and almost equal charges, the global minimum is achieved at two mutually symmetric triangles
  that are close to the equilateral one.

\section{Concluding remarks}

The considerations and results of the present paper suggest a few immediate remarks.

(1) The same qualitative behaviour holds for several other central forces, that is, e.g. for the potentials
$$E = \sum_{i<j} \frac{q_i q_j}{d_{ij}^k} \; \; (k > 1), \hbox{ \ or } $$  $$E = \sum_{i<j} q_i q_j \log{d_{ij}} \hbox{ \ \ (logarithmic force)}.$$

(2)  A natural generalization of the setting suggested in this paper arises
if one considers equilibria of point charges lying on several different contours
in the plane. In this case Coulomb energy defines a differentiable function on a torus
so there are topological restrictions on the number of critical points. It would
be interesting to find examples of charges for which the number of
equilibria is minimal or maximal.

(3) There apparently exist many other developments for which the results of this paper
may serve as paradigms.

\medskip


\begin{thebibliography}{99}

\bibitem{exn}
P. Exner, An isoperimetric problem for point interactions, J. Phys. A: Math. Gen.
A38, 2005, 4795-4802.

\bibitem{giokhi1}
G.Giorgadze, G.Khimshiashvili, On non-degeneracy of certain constrained extrema,
(Russian) Doklady Math. 465, No.3, 2015, 1-5.

\bibitem{GS} M. Golubitsky, D. Schaeffer, Singularities and Groups in Bifurcation Theory, Vol. 1, Appl. Math. Science, 51, Springer, 1985.

\bibitem{khi2}
G. Khimshiashvili, Equilibria of constrained point charges, Bull. Georgian Natl. Acad.
Sci. 7, No.2, 2013, 15-20.

\bibitem{khpasi1}
G.Khimshiashvili, G.Panina, D.Siersma, Coulomb control of polygonal linkages, J. Dyn.
Contr. Syst. 14, No.4, 2014, 491-501.

\bibitem{khpasi2}
G. Khimshiashvili, G. Panina, D. Siersma, Equilibria of point charges on convex curves, J.
Geom. Phys. 98, 2015, 110-117.

\bibitem{khpansizol} G.Khimshiashvili, G.Panina, D.Siersma, V. Zolotov, Point charges and polygonal linkages, J. Dyn.
Contr. Syst., to appear. Available online http://link.springer.com/article/10.1007/s10883-015-9286-3/fulltext.html

\bibitem{PS} T. Poston, I. Stuart, {Catastrophe theory and its applications}, Pitman, 1978.
\end{thebibliography}
\end{document}